\tikzstyle{vertex}=[circle, draw, inner sep=0pt, minimum size=6pt]
\newtheorem{theorem}{Theorem}
\newtheorem{proposition}[theorem]{Proposition}
\newtheorem{lemma}[theorem]{Lemma}
\newtheorem{corollary}[theorem]{Corollary}
\theoremstyle{definition}
\theoremstyle{remark}
\let\originalleft\left
\let\originalright\right
\renewcommand{\left}{\mathopen{}\mathclose\bgroup\originalleft}
\renewcommand{\right}{\aftergroup\egroup\originalright}
\newcommand{\nrel}[1]{\mathrm{nRel}(#1;p)}
\newcommand{\nrelvalue}[2]{\mathrm{nRel}(#1;#2)}
\title{Maximal Intervals of Decrease and \\Inflection Points for Node Reliability}
\author{Jason Brown\\
Department of Mathematics and Statistics\\
Dalhousie University\\
Halifax, Nova Scotia, Canada B3H 3J5}
\begin{document}
\maketitle
\begin{abstract}
The \textit{node reliability} of a graph $G$ is the probability that at least one node is operational and that the operational nodes can all communicate in the subgraph that they induce, given that the edges are perfectly reliable but each node operates independently with probability $p\in[0,1]$.  We show that unlike many other notions of graph reliability, the number of maximal intervals of decrease in $[0,1]$ is unbounded, and that there can be arbitrarily many inflection points in the interval as well.
\end{abstract}



Robustness of a network to random failures has been well-studied. In the most usual model, {\em all-terminal reliability}, nodes of an (undirected , finite) graph $G$ are always assumed to operational, while the edges are independently operational with probability $p \in [0,1]$, and we are interested in the probability that the operational edges form a  spanning connected subgraph. This is model is well-behaved, in the sense that  the all-terminal reliability is always an increasing function of $p$ -- making the edges more ``reliable" can on. Many other reliability models both of undirected and directed graphs, such as {\em two-terminal reliability}, {\em $K$-terminal reliability} and {\em reachability} (see, for example, \cite{ColbournBook}) have the same expected property. For a recent discussion of reliability in variants, see \cite{highwayreliability}.

On the other hand, the node version of reliability, {\em node reliability}, behaves less well. Given a graph $G$, we assume now the nodes each are independently operational with probability $p \in [0,1]$, and ask for the probability $\nrel{G}$, that the induced subgraph on the operational vertices is connected (and nonempty). Node reliability has been used to model the robustness of missile deployment networks \cite{Stivaros} as well as that of connectivity in social networks (see, for example, \cite{brownmolshape}). Much of the research into node reliability have been on optimal graphs \cite{NodeRel1,NodeRel2,Stivaros,NodeRel3} and algorithmic issues \cite{NodeRelComplexity1,NodeRelComplexity2}.

Unlike the other forms of reliability, node reliability is not necessarily an increasing function of $p \in [0,1]$. In fact, there can be a significant interval of decrease. Figure~\ref{fig:fivepathDecreasing} shows a plot of the node reliability of $P_5$, the path of order $5$ (i.e. on $5$ nodes), and one can see an interval of decrease of approximately $[0.28,0.59]$.

\begin{figure}[!h]
\centering
\includegraphics[scale=0.6]{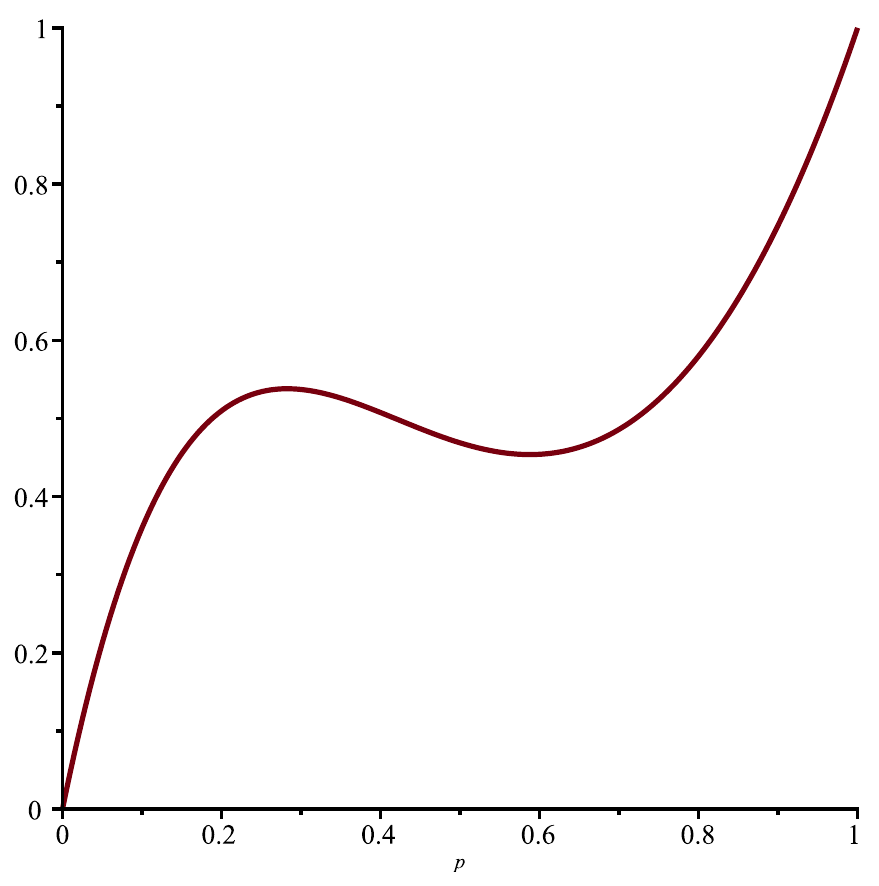}
\caption{A plot of $\nrel{P_{5}} = 3p^5-12p^4+21p^3-16p^2+5p$.}
\label{fig:fivepathDecreasing}
\end{figure}

In previous known cases (and there are many -- the node reliability of any connected graph of order $n$ with at most $0.0851n^2$ edges has an interval of decrease \cite{brownmolshape}), when an interval of decrease was detected, there was just one such maximal interval of decrease.   This begs the question of how pathological node reliability can be with respect to monotonicity -- can there be multiple maximal intervals of decrease in $[0,1]$? This questions, along with the question of whether the number of inflection points in $[0,1]$ for node reliability, were raised in \cite{highwayreliability,brownmolshape} (it was previously known that there is a graph whose node reliability has three points of inflection in the interval). Here we answer both in the affirmative. 

\vspace{0.2in}
We begin with a simple graph operation that will be crucial to our argument. For a graph $G$ and positive integer $l$, we let $G[K_l]$ denote the graph formed from $G$ by replacing each vertex of $G$ by a complete graph of order $l$ (this graph is often called the graph formed from $G$ by substituting in a copy $K_l^v$ of $K_l$ for each node $v$, or the {\em lexicographic product} of $G$ with $K_l$). We shall make repeated use of the following lemma.

\begin{lemma}
For any graph $G$, 
\begin{eqnarray}
\nrel{G[K_l]} & = & \nrelvalue{G}{1-(1-p)^l}. \label{clique}
\end{eqnarray}
\end{lemma}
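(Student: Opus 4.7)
The plan is to recast the event defining $\nrel{G[K_l]}$ in terms of an event on $G$ itself. Let $q = 1-(1-p)^l$, and for each vertex $v$ of $G$, call the clique $K_l^v$ in $G[K_l]$ \emph{alive} if at least one of its $l$ vertices is operational. Since distinct cliques $K_l^v$ share no vertices and all vertices operate independently, the events ``$K_l^v$ is alive'' for $v \in V(G)$ are mutually independent, each of probability $q$.

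The central claim I would establish is that the set of operational vertices of $G[K_l]$ induces a connected nonempty subgraph if and only if the set of alive super-vertices induces a connected nonempty subgraph of $G$. Once this is shown, the right-hand side is exactly $\nrel{G}$ evaluated at $p = q$, giving \eqref{clique}. The nonemptiness equivalence is immediate: some vertex of $G[K_l]$ is operational iff some clique is alive.

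For the connectivity equivalence, I would argue both directions directly from the structure of the lexicographic product. For the ``if'' direction, suppose the alive super-vertices induce a connected subgraph in $G$, and let $x \in K_l^u$ and $y \in K_l^w$ be two operational vertices of $G[K_l]$. Then $u$ and $w$ are alive, so some path $u = v_0, v_1, \ldots, v_k = w$ of alive super-vertices exists in $G$. Picking an operational vertex $x_i \in K_l^{v_i}$ for each $i$ (with $x_0 = x$, $x_k = y$), the fact that consecutive cliques $K_l^{v_i}, K_l^{v_{i+1}}$ are completely joined in $G[K_l]$ makes $x_0, x_1, \ldots, x_k$ a path of operational vertices; within each clique, $x_i$ is adjacent to every other operational vertex of $K_l^{v_i}$, so the whole operational set is connected. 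For the converse, if $u, w$ are alive with operational representatives $x, y$, a path of operational vertices from $x$ to $y$ in $G[K_l]$ projects (via the natural map sending each vertex of $K_l^v$ to $v$) to a walk in $G$ whose vertices are all alive and which connects $u$ to $w$.

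The only subtle point, which is really the heart of the argument, is checking that this projected sequence is actually a walk in $G$: consecutive vertices on the original path either lie in the same clique (in which case the projection repeats a super-vertex, which is fine in a walk) or lie in cliques $K_l^{v_i}, K_l^{v_{i+1}}$ joined by an edge of $G[K_l]$, which by the definition of the lexicographic product forces $v_i v_{i+1} \in E(G)$. Everything else is routine bookkeeping, and combining independence of the alive events with the equivalence yields the identity.
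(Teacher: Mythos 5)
Your proof is correct and follows the same route as the paper: the paper's argument also maps each operational set $S\subseteq V(G[K_l])$ to the set of super-vertices $v$ with $K_l^v\cap S\neq\emptyset$ (your ``alive'' vertices) and observes that connectivity and nonemptiness are preserved in both directions. You simply spell out the independence of the alive events and the two directions of the connectivity equivalence that the paper leaves as ``easy to see.''
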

\begin{proof}
Consider any subset $S$ of $V(G[K_l])$, and $S^\prime$ be the subset of nodes $v$ of $G$ such that  $K_l^v \cap S \neq \emptyset$. Then it is easy to see that the subgraph of $G[K_l]$ induced by $S$ is connected and nonempty if and only if the subgraph of $G$ induced by $S^\prime$ is connected and nonempty. The result easily follows.
\end{proof}

Note that from (\ref{clique}), the effect is that, as $l$ grows, the curve of $\nrel{G[K_l]}$ moves more and more to the left. 

\begin{corollary}\label{moveextremaleft}
Suppose that $G$ is a graph whose node reliability has local extrema 
\[ \beta_1 < \beta_2 < \cdots < \beta_k\]
at locations $\alpha_1 < \alpha_2 < \cdots < \alpha_k$ inside the interval $(0,1)$. Then for any $\gamma \in (0,1)$, there is positive integer $L$ such that for all $l \geq L$, $\nrel{G[K_l]}$ has the same local extrema as $G$ inside $(0,1)$, at locations that are less than $\gamma$. \qed
\end{corollary}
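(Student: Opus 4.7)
The plan is to use the lemma directly: writing $\phi_l(p) = 1 - (1-p)^l$, the lemma gives
\[
\nrel{G[K_l]} \;=\; \nrelvalue{G}{\phi_l(p)},
\]
so $\nrel{G[K_l]}$ is the composition of $\nrel{G}$ with $\phi_l$. Since $\phi_l : [0,1] \to [0,1]$ is a continuous, strictly increasing bijection (indeed, a smooth homeomorphism) with $\phi_l(0)=0$ and $\phi_l(1)=1$, composition with it preserves the structure of local extrema on $(0,1)$. I would make this precise by observing that on any open interval $I \subseteq (0,1)$, $\nrel{G[K_l]}$ attains its max/min at a point $p_0 \in I$ if and only if $\nrel{G}$ attains the corresponding max/min at $\phi_l(p_0) \in \phi_l(I)$; this is immediate from strict monotonicity of $\phi_l$.

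Consequently, $\nrel{G[K_l]}$ has local extrema in $(0,1)$ precisely at the points $\alpha_i^{(l)} := \phi_l^{-1}(\alpha_i)$, $1 \le i \le k$, and the values there are $\nrel{G[K_l]}(\alpha_i^{(l)}) = \nrelvalue{G}{\alpha_i} = \beta_i$. Since $\phi_l^{-1}$ is strictly increasing, these extrema still appear in the same left-to-right order, and they are of the same type (maximum or minimum) as the originals. So the only remaining task is to show that, for $l$ large, all the $\alpha_i^{(l)}$ lie below $\gamma$.

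Explicitly, $\phi_l^{-1}(\alpha) = 1 - (1-\alpha)^{1/l}$, and for any fixed $\alpha \in (0,1)$ this tends to $0$ as $l \to \infty$. Because $\phi_l^{-1}$ is increasing in $\alpha$, it suffices to ensure $\phi_l^{-1}(\alpha_k) < \gamma$, i.e. $(1-\alpha_k)^{1/l} > 1-\gamma$, which (taking logarithms, noting both $\log(1-\alpha_k)$ and $\log(1-\gamma)$ are negative) holds whenever
\[
l \;>\; \frac{\log(1-\alpha_k)}{\log(1-\gamma)}.
\]
Choosing $L$ to be any integer exceeding this bound then gives the desired conclusion.

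There is no real obstacle here: the whole argument is essentially a change of variables via the monotone homeomorphism $\phi_l$, plus the elementary limit $(1-\alpha)^{1/l} \to 1$. The only point requiring a modicum of care is verifying that, aside from the pulled-back extrema $\alpha_i^{(l)}$, no \emph{new} local extrema of $\nrel{G[K_l]}$ appear in $(0,1)$; but this is again forced by the fact that $\phi_l$ is a bijection from $(0,1)$ to $(0,1)$, so any local extremum of the composition in $(0,1)$ must correspond, via $\phi_l$, to a local extremum of $\nrel{G}$ in $(0,1)$.
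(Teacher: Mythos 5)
Your argument is correct and matches the paper's intended reasoning exactly: the paper leaves the corollary unproved (marking it \qed) after remarking that the substitution $p \mapsto 1-(1-p)^l$ shifts the curve leftward, and it later uses the very same inverse map $g_l(p) = 1-(1-p)^{1/l}$ that you call $\phi_l^{-1}$. Your explicit bound $l > \log(1-\alpha_k)/\log(1-\gamma)$ and the observation that a strictly increasing homeomorphism of $(0,1)$ preserves the order, type, and values of local extrema are precisely the details the paper omits.
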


Given graph $F$, we denote the graphs formed from $F$ by either adjoining an isolated vertex or a vertex universal to $F$ (i.e joined to all vertices of $F$) by $F \cup K_1$ and $F + K_1$, respectively. The idea is that we will alternately adjoin isolated and universal vertices to introduce more extrema. The issue is how to do this without destroying old extrema. The next two results uses Corollary~\ref{moveextremaleft} to do so. 

\begin{proposition}\label{K1disjointunion}
Suppose that $G$ is a connected graph, $k$ a positive integer and $\nrelvalue{G}{\alpha_i} = \beta_i$, where 
\[ 0 < \alpha_1 < \alpha_2 < \cdots < \alpha_k < 1\] 
and 
\[ 0 < \beta_1 > \beta_2 < \beta_3 > \beta_4 < \cdots < \beta_k.\] 
Then there is positive integer $l$ such that for some $\alpha_{1}^\prime,\alpha_{2}^\prime,\ldots,\alpha_{k}^\prime,\alpha_{k+1}^\prime \in (0,1)$, that  
\[ 0 < \alpha_1^\prime < \alpha_2^\prime < \cdots < \alpha_k^\prime < \alpha_{k+1}^\prime < 1\] and $\nrelvalue{G[K_l] \cup K_1}{\alpha_{k}^\prime} = \beta_{i}^{\prime}$ satisfy 
\[ 0 < \beta_1^\prime > \beta_2^\prime < \beta_3^\prime < > \beta_4^\prime < \cdots < \beta_k^\prime > \beta_{k+1}^\prime.\]
\end{proposition}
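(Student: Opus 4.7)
The strategy is to pull the reliability of $G[K_l]\cup K_1$ back via the change of variables $q=1-(1-p)^l$, so that it becomes a small perturbation of $\nrelvalue{G}{q}$ on most of $[0,1)$, with new behavior only near the right endpoint. Conditioning on whether the adjoined isolated vertex is operational and invoking (\ref{clique}) gives
\[
\nrelvalue{G[K_l]\cup K_1}{p} \;=\; p(1-p)^{ln} + (1-p)\,\nrelvalue{G}{1-(1-p)^l},
\]
where $n=|V(G)|$. Setting $p=1-(1-q)^{1/l}$ and defining
\[
\tilde f_l(q) \;=\; \bigl[1-(1-q)^{1/l}\bigr](1-q)^n + (1-q)^{1/l}\,\nrelvalue{G}{q},
\]
one checks that $\tilde f_l(0)=\tilde f_l(1)=0$; and since $(1-q)^{1/l}\to 1$ uniformly on $[0,1-\epsilon]$ for each $\epsilon>0$, the function $\tilde f_l$ converges uniformly on $[0,1-\epsilon]$ to $\nrelvalue{G}{\cdot}$ as $l\to\infty$.

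Next I would recover the original $k$ extrema. Because $\nrelvalue{G}{\cdot}$ is a polynomial that is strictly monotonic between consecutive critical points, each $\alpha_i$ is a strict local extremum, so one can pick pairwise disjoint closed neighborhoods $N_i\subset(0,1-\epsilon)$ on which $\beta_i$ is attained only at $\alpha_i$ and whose endpoint values differ from $\beta_i$ by a fixed positive amount. Uniform $C^0$ proximity of $\tilde f_l$ to $\nrelvalue{G}{\cdot}$ then forces, for all $l$ large enough, an interior extremum $\tilde\alpha_i$ of $\tilde f_l$ in $N_i$ of the same type (max or min) as $\alpha_i$, with value $\beta_i':=\tilde f_l(\tilde\alpha_i)$ arbitrarily close to $\beta_i$. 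In particular, by taking $l$ large, $\beta_1',\ldots,\beta_k'$ obey the same strict alternating inequalities as $\beta_1,\ldots,\beta_k$.

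Finally I would produce the $(k+1)$-st extremum from the behavior near the right endpoint. Since $\alpha_k$ is the last interior extremum of $\nrelvalue{G}{\cdot}$, the function is monotonic on $(\alpha_k,1)$ with $\nrelvalue{G}{1}=1>\beta_k$, so I can fix $q_0\in(\alpha_k,1)$ and $\delta>0$ with $\nrelvalue{G}{q_0}>\beta_k+\delta$. For $l$ large, $\tilde f_l(q_0)>\beta_k+\delta/2>\beta_k'$, while $\tilde f_l(1)=0<\beta_k'$; continuity then yields an interior point $\tilde\alpha_{k+1}\in(\tilde\alpha_k,1)$ at which $\tilde f_l$ attains its maximum on $[\tilde\alpha_k,1]$, giving a new local extremum with value $\beta_{k+1}'>\beta_k'$. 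Mapping back by the monotone transformation $\alpha_i'=1-(1-\tilde\alpha_i)^{1/l}$ yields $k+1$ extrema of $\nrelvalue{G[K_l]\cup K_1}{\cdot}$ at distinct points of $(0,1)$ with the claimed alternating pattern. The main obstacle is the preservation step for the old extrema, which the change of variables reduces to a routine uniform-perturbation argument supported by the strictness coming from the alternating hypothesis and the polynomial nature of node reliability.
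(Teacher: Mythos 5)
Your change of variables and the uniform convergence of $\tilde f_l$ to $\nrelvalue{G}{\cdot}$ on $[0,1-\epsilon]$ are sound, and this is really the same mechanism as the paper's proof (which pushes the $\alpha_i$ leftward via $g_l(p)=1-(1-p)^{1/l}$ and bounds the resulting perturbation of the $\beta_i$ explicitly). The genuine problem is your final step, which proves the wrong inequality. The hypothesis ends with $\beta_{k-1}<\beta_k$, so the conclusion requires the alternation to continue \emph{downward}: $\beta_k'>\beta_{k+1}'$. You instead locate a maximum of $\tilde f_l$ on $[\tilde\alpha_k,1]$ with value $\beta_{k+1}'>\beta_k'$, which yields $\beta_{k-1}'<\beta_k'<\beta_{k+1}'$ and breaks the alternating pattern; that is the step needed for Proposition~\ref{K1join} (adjoining a universal vertex), not here. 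The whole point of adjoining an isolated vertex is that $G[K_l]\cup K_1$ is disconnected, so its node reliability vanishes at $p=1$; you should therefore take $\alpha_{k+1}'$ close enough to $1$ that the value has dropped below $\beta_k'$ --- you already observe $\tilde f_l(1)=0<\beta_k'$, so this is immediate by continuity --- rather than hunting for the intermediate hump.

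A secondary issue: your middle paragraph asserts that each $\alpha_i$ is a strict local extremum of $\nrelvalue{G}{\cdot}$. That is not part of the hypothesis, which only supplies points at which the values alternate (and that is all the conclusion asks for as well). The neighborhood/extremum-trapping argument is therefore both unjustified and unnecessary: uniform convergence already gives $\tilde f_l(\alpha_i)\to\beta_i$, so for $l$ large the values $\beta_i':=\tilde f_l(\alpha_i)$ at the original points satisfy the same strict alternating inequalities, which is all you need before the endpoint step.
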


\begin{proof}
To start, we set 
\[ \gamma = \mbox{min}\left( \frac{1}{4},\left\{ \frac{1}{3}\left( \beta_{i+1}-\beta_{i}\right): 1 \leq i \leq k-1\right\}\right).\]
Clearly $\gamma$ is in $(0,1)$. From Corollary~\ref{moveextremaleft}, if $g_{l}(p) = 1-(1-p)^{1/l}$ (the inverse of $f_{l}(p) = 1-(1-p)^l$ on $[0,1]$) we can chose an $l$ such that $\alpha_{i}^\prime = g_l(\alpha_{i}) < \gamma$ for $i = 1,\ldots, k$. Then from (\ref{clique}), we have that
\[ \nrelvalue{G[K_l]}{\alpha_{i}^\prime} = \nrelvalue{G}{\alpha_{i}} = \beta_{i}\]
for $i = 1,\ldots,k$.

Consider now the graph $H = G[K_l] \cup K_1$; let its order be $n$. Its node reliability is given by
\[ \nrel{H} = p(1-p)^{n-1} + (1-p)\nrel{G[K_l]}.\]
It follows that for any $i \in \{1,2,\ldots,k\}$,
\begin{eqnarray*}
\nrelvalue{H}{\alpha_{i}^\prime} & = & \alpha_{i}^\prime (1-\alpha_{i}^\prime)^{n-1} + (1-\alpha_{i}^\prime)\nrelvalue{G[K_l]}{\alpha_{i}^\prime}\\
                                                   & = & \alpha_{i}^\prime (1-\alpha_{i}^\prime)^{n-1} + (1-\alpha_{i}^\prime)\beta_{i}.
\end{eqnarray*}
We therefore define 
\[ \beta_{i}^{\prime} = \nrelvalue{H}{\alpha_{i}^\prime} = \alpha_{i}^\prime (1-\alpha_{i}^\prime)^{n-1} + (1-\alpha_{i}^\prime)\beta_{i}.\]

Now for $i = 1,\ldots,k-1$,
\begin{eqnarray*}
\beta_{i+1}^\prime - \beta_{i}^\prime & = & (\alpha_{i+1}^\prime (1-\alpha_{i+1}^\prime)^{n-1} + (1-\alpha_{i+1}^\prime)\beta_{i+1}) - (\alpha_{i}^\prime (1-\alpha_{i}^\prime)^{n-1} + (1-\alpha_{i}^\prime)\beta_{i})\\
 & = & (\alpha_{i+1}^\prime (1-\alpha_{i+1}^\prime)^{n-1} - \alpha_{i}^\prime (1-\alpha_{i}^\prime)^{n-1}) + (1-\alpha_{i+1}^\prime)(\beta_{i+1}-\beta_{i}) +(\alpha_{i}^\prime - \alpha_{i+1})\beta_{i}
\end{eqnarray*}
Now $1-\alpha_{i+1}^\prime \geq 3/4$, and
\[ |\alpha_{i+1}^\prime (1-\alpha_{i+1}^\prime)^{n-1} - \alpha_{i}^\prime (1-\alpha_{i}^\prime)^{n-1}| \leq \alpha_{i+1}^\prime \leq \gamma \leq (1/4)(\beta_{i+1}-\beta_{i}),\]
and
\[ |(\alpha_{i}^\prime - \alpha_{i+1})\beta_{i}| \leq \alpha_{i+1} \leq \gamma \leq (1/4)(\beta_{i+1}-\beta_{i}),\]
it follows that $\beta_{i+1}^\prime - \beta_{i}^\prime$ will have the same sign as $\beta_{i+1}-\beta_{i}$, so that
\[ 0 < \beta_1^\prime > \beta_2^\prime < \beta_3^\prime < > \beta_4^\prime < \cdots < \beta_k^\prime.\]
Finally, as $H = G[K_l] \cup K_1$ is disconnected, $\nrelvalue{H}{1} = 0$, and so we can choose $\alpha_{k+1}^\prime \in (\gamma,1)$ such that 
\[ \beta_{k+1}^\prime = \nrelvalue{H}{\alpha_{k+1}^\prime} < \beta_{k}^\prime,\]
and we are done.
\end{proof}

A similar argument shows the following.

\begin{proposition}\label{K1join}
Suppose that $G$ is a disconnected graph, $k$ a positive integer and $\nrelvalue{G}{\alpha_i} = \beta_i$, where 
\[ 0 < \alpha_1 < \alpha_2 < \cdots < \alpha_k < 1\] 
and 
\[ 0 < \beta_1 > \beta_2 < \beta_3 > \beta_4 < \cdots > \beta_{k}.\] 
Then there is positive integer $l$ such that for some $\alpha_{1}^\prime,\alpha_{2}^\prime,\ldots,\alpha_{k}^\prime,\alpha_{k+1}^\prime \in (0,1)$, that  
\[ 0 < \alpha_1^\prime < \alpha_2^\prime < \cdots < \alpha_k^\prime < \alpha_{k+1}^\prime < 1\] and $\nrelvalue{G[K_l] + K_1}{\alpha_{k}^\prime} = \beta_{i}^{\prime}$ satisfy 
\[ 0 < \beta_1^\prime > \beta_2^\prime < \beta_3^\prime < > \beta_4^\prime < \cdots >\beta_k^\prime < \beta_{k+1}^\prime.\]
\end{proposition}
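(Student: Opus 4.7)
The plan is to mirror the proof of Proposition~\ref{K1disjointunion} almost verbatim, substituting $G \cup K_1$ with $G + K_1$ and the corresponding reliability formula. The only real difference arises at the end: joining a universal vertex produces a connected graph with reliability $1$ (rather than $0$) at $p=1$, which is precisely the ingredient needed to push the reliability \emph{up} past $\beta_k^\prime$ to produce the extra \emph{high} point demanded by this statement (versus the extra low point in the disjoint-union version).

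First I would establish the reliability formula for a join. Conditioning on whether the universal vertex is operational, and noting that whenever it is operational the induced operational subgraph is automatically connected and nonempty,
\[ \nrel{F + K_1} = p + (1-p)\nrel{F}\]
for any graph $F$.

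Next I would imitate the calculation of Proposition~\ref{K1disjointunion}. Set
\[ \gamma = \min\left( \frac{1}{4},\left\{ \tfrac{1}{3}(\beta_{i+1}-\beta_{i}): 1 \leq i \leq k-1\right\}\right)\]
(interpreted in absolute value), and appeal to Corollary~\ref{moveextremaleft} to pick $l$ so that the points $\alpha_i^\prime = g_l(\alpha_i) \in (0,\gamma)$ satisfy $\nrelvalue{G[K_l]}{\alpha_i^\prime} = \beta_i$. Set $H = G[K_l] + K_1$ and define $\beta_i^\prime = \nrelvalue{H}{\alpha_i^\prime} = \alpha_i^\prime + (1-\alpha_i^\prime)\beta_i$. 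A direct rearrangement gives
\[ \beta_{i+1}^\prime - \beta_i^\prime = (1-\alpha_{i+1}^\prime)(\beta_{i+1}-\beta_i) + (\alpha_{i+1}^\prime - \alpha_i^\prime)(1-\beta_i).\]
Since $\alpha_i^\prime, \alpha_{i+1}^\prime < \gamma \leq 1/4$, the first term has magnitude at least $\tfrac{3}{4}|\beta_{i+1}-\beta_i|$ while the second has magnitude at most $\gamma \leq \tfrac{1}{3}|\beta_{i+1}-\beta_i|$, so the sign of $\beta_{i+1}^\prime - \beta_i^\prime$ agrees with that of $\beta_{i+1}-\beta_i$, preserving the first $k$ terms of the required alternation.

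Finally, unlike the disjoint-union case, the graph $H$ contains a universal vertex and so $\nrelvalue{H}{1} = 1 > \beta_k^\prime$. Continuity lets me pick $\alpha_{k+1}^\prime \in (\gamma,1)$ with $\beta_{k+1}^\prime := \nrelvalue{H}{\alpha_{k+1}^\prime} > \beta_k^\prime$, producing the closing high point. I do not anticipate any real obstacle beyond careful bookkeeping; the substitution of formulas and the flipped behaviour at $p = 1$ are exactly matched to the flipped alternation pattern in the hypothesis, so the same dominance estimate used in Proposition~\ref{K1disjointunion} suffices here.
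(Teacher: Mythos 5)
Your proposal is correct and follows essentially the same route as the paper: the same choice of $\gamma$ and $\alpha_i^\prime$ via Corollary~\ref{moveextremaleft}, the same join formula $\nrel{H} = p + (1-p)\nrel{G[K_l]}$, the same sign-preservation estimate for $\beta_{i+1}^\prime - \beta_i^\prime$, and the same use of $\nrelvalue{H}{1}=1$ to append the final high point. Your two-term rearrangement of $\beta_{i+1}^\prime - \beta_i^\prime$ and your explicit note that $\gamma$ must be taken with absolute values are minor tidy-ups of details the paper leaves implicit.
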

\begin{proof}
We define $\gamma$ and $\alpha_{i}^\prime$ ($i = 1,\ldots,k$) as in the previous proof, and have that
\[ \nrelvalue{G[K_l]}{\alpha_{i}^\prime} = \nrelvalue{G}{\alpha_{i}} = \beta_{i}\]
for $i = 1,\ldots,k$.

Consider now the graph $H = G[K_l] + K_1$. Its node reliability is given by
\[ \nrel{H} = p + (1-p)\nrel{G[K_l]}.\]
It follows that for any $i \in \{1,2,\ldots,k\}$,
\begin{eqnarray*}
\nrelvalue{H}{\alpha_{i}^\prime} & = & \alpha_{i}^\prime + (1-\alpha_{i}^\prime)\nrelvalue{G[K_l]}{\alpha_{i}^\prime}\\
                                                   & = & \alpha_{i}^\prime + (1-\alpha_{i}^\prime)\beta_{i}.
\end{eqnarray*}
We define again
\[ \beta_{i}^{\prime} = \nrelvalue{H}{\alpha_{i}^\prime} = \alpha_{i}^\prime + (1-\alpha_{i}^\prime)\beta_{i}.\]

Now for $i = 1,\ldots,k-1$,
\begin{eqnarray*}
\beta_{i+1}^\prime - \beta_{i}^\prime & = & (\alpha_{i+1}^\prime + (1-\alpha_{i+1}^\prime)\beta_{i+1}) - (\alpha_{i}^\prime + (1-\alpha_{i}^\prime)\beta_{i})\\
 & = & (\alpha_{i+1}^\prime - \alpha_{i}^\prime) + (1-\alpha_{i+1}^\prime)(\beta_{i+1}-\beta_{i}) +(\alpha_{i}^\prime - \alpha_{i+1}^\prime)\beta_{i}
\end{eqnarray*}
A very similar argument to that in the previous proof that $\beta_{i+1}^\prime - \beta_{i}^\prime$ will have the same sign as $\beta_{i+1}-\beta_{i}$, so that
\[ 0 < \beta_1^\prime > \beta_2^\prime < \beta_3^\prime > \beta_4^\prime < \cdots > \beta_k^\prime.\]
Finally, as $H = G[K_l] \cup K_1$ is connected, $\nrelvalue{H}{1} = 1$, and so we can choose $\alpha_{k+1}^\prime \in (\gamma,1)$ such that 
\[ \beta_{k+1}^\prime = \nrelvalue{H}{\alpha_{k+1}^\prime} > \beta_{k}^\prime,\]
and we are done.
\end{proof}

\begin{theorem}
Let $k$ be any positive integer. There is a connected graph $G$ whose node reliability has at least $k$ maximal intervals of decrease in $(0,1)$.
\end{theorem}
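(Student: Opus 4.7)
The plan is to argue by induction, alternately invoking Propositions~\ref{K1disjointunion} and~\ref{K1join} to build up an arbitrarily long alternating sequence of sample values of the node reliability while toggling connectedness at each step. The base case takes $G_0 = K_1$ (connected) together with any $\alpha_1 \in (0,1)$; since $\nrelvalue{K_1}{\alpha_1} = \alpha_1 > 0$, the trivial ``alternating'' sequence of length one is in place. Proposition~\ref{K1disjointunion} with $k=1$ then applies and produces a disconnected graph $G_1$ whose reliability realizes an alternating sequence of length two ending in $\beta_1 > \beta_2$. This in turn satisfies the hypotheses of Proposition~\ref{K1join}, whose output is a connected graph $G_2$ with an alternating sequence of length three ending in $\beta_2 < \beta_3$; this once again meets the hypotheses of Proposition~\ref{K1disjointunion}, and so on.

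Iterating this alternation for $2k$ steps (finishing with Proposition~\ref{K1join}, which guarantees connectedness) produces a connected graph $G$ and locations $0 < \alpha_1 < \alpha_2 < \cdots < \alpha_{2k+1} < 1$ satisfying
\[ 0 < \nrelvalue{G}{\alpha_1} > \nrelvalue{G}{\alpha_2} < \nrelvalue{G}{\alpha_3} > \cdots < \nrelvalue{G}{\alpha_{2k+1}}.\]
Since $\nrel{G}$ is a polynomial in $p$, the mean value theorem yields, for each $i \in \{1,\ldots,k\}$, a point $x_i \in (\alpha_{2i-1},\alpha_{2i})$ at which the derivative of $\nrel{G}$ is negative and a point $y_i \in (\alpha_{2i},\alpha_{2i+1})$ at which it is positive. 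Each $x_i$ therefore lies in some maximal interval of decrease $I_i$, and the presence of the intermediate point $y_i$ (with positive derivative) prevents any two consecutive $I_i$'s from coinciding; hence $I_1,\ldots,I_k$ are pairwise distinct, giving $G$ at least $k$ maximal intervals of decrease in $(0,1)$.

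The main obstacle, such as it is, lies entirely inside Propositions~\ref{K1disjointunion} and~\ref{K1join}, which have already handled the delicate work of adjusting the sample points (via the ``shift-to-the-left'' Corollary~\ref{moveextremaleft}) and preserving the direction of each alternating inequality. What remains here is really bookkeeping: verifying that the parity of the sequence length and the connectedness status of each intermediate graph match the hypotheses of the two propositions at every step of the iteration, and that $2k$ alternations --- not $k$ --- are what is needed to realize $k$ disjoint intervals of decrease.
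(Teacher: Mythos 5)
Your proof is correct and follows essentially the same strategy as the paper: alternately applying Propositions~\ref{K1disjointunion} and~\ref{K1join} to lengthen an alternating sequence of sample values of the node reliability while toggling connectedness at each step, and then converting the alternation into distinct maximal intervals of decrease. The differences are organizational rather than substantive --- the paper inducts directly on the number of maximal intervals of decrease starting from $P_5$ and reads off its sample points from the extrema of the previous graph, whereas you run a single chain from $K_1$ and spell out the mean-value-theorem step at the end; your parity bookkeeping (always handing each proposition a sample sequence ending in the direction its hypothesis requires) is, if anything, slightly more careful than the paper's own.
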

\begin{proof}
If $k=1$, we can take $G = P_{5}$ (in fact , any one of many graphs with an interval of decrease in $(0,1)$ can be a starting point for our induction). For any $k > 1$, let $H$ be any connected graph whose node reliability has (at least) $k-1$ maximal intervals of decrease. Then clearly there are $\alpha_{1},\alpha_{2},\ldots,\alpha_{2k-2}$ with 
\[ 0 < \alpha_{1} < \alpha_{2} < \ldots < \alpha_{2k-2} < 1\] 
with 
\[ 0 < \beta_{1} > \beta_2 < \beta_3  > \beta_4 < \ldots < \beta_{2k-3} > \beta_{2k-2} < 1\] 
and $\beta_{i} = \nrelvalue{H}{\alpha_{i}}.$
From Proposition~\ref{K1disjointunion}, we can blow up the graph by substituting a large enough clique in for each vertex of $H$ and then adjoin an isolated vertex to get a disconnected graph whose node reliability has another local maximum tacked on the end. Call this new graph $H^\prime$. Then by Proposition~\ref{K1join}, we can substitute a large enough clique in for each vertex of $H^\prime$ and then adjoin a universal vertex to get a new graph $G$ whose node reliability will have another local minimum at the end. It follows that the node reliability of the new graph $G$ has at least one more interval of decrease than $H$ had, and we are done.
\end{proof}

Another interesting question that has been posed (see \cite{brownmolshape}) is whether there can be arbitrarily many points of inflection for node reliability (for all-terminal reliability, the answer is known, the affirmative, but only when multiple edges are allowed \cite{ManyInflectionPoints}). It is easy to see that if a polynomial $f = f(p)$ has a local maximum at $p = a$, then  $f^\prime(x)$ is increasing sufficiently close but to the left of $a$ and decreasing sufficiently close but to the right of $a$, 
$f$ is concave down for all $x$ sufficiently close to (but not necessarily equal to) $a$. Likewise, around a local minimum, $f$ is concave up. 
It follows that if the node reliability of a connected graph $G$ has $k$ maximal intervals of decrease, then $G$ has at least $k$ points of inflection. We thus derive the following interesting consequence.

\begin{corollary}
For any positive integer $k$, there are (connected) graphs whose node reliabilities has at least $k$ points of inflection. \qed
\end{corollary}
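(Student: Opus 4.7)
The plan is to derive this corollary as an immediate consequence of the preceding theorem together with the standard calculus observation already noted in the paragraph preceding the statement. Given any positive integer $k$, I would first invoke the theorem to obtain a connected graph $G$ whose node reliability $\nrel{G}$ has at least $k$ maximal intervals of decrease in $(0,1)$. Label these intervals $[a_1,b_1], [a_2,b_2], \ldots, [a_k,b_k]$, where each $a_i$ is a local maximum and each $b_i$ is a local minimum of $\nrel{G}$.

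Next I would use the fact that $\nrel{G}$ is a polynomial in $p$, hence infinitely differentiable, so its second derivative is continuous on $[0,1]$. At the local maximum $a_i$, the function is concave down on some neighborhood of $a_i$, so there is a point slightly to the right of $a_i$ at which the second derivative is strictly negative. Symmetrically, at the local minimum $b_i$, the function is concave up on some neighborhood of $b_i$, so there is a point slightly to the left of $b_i$ at which the second derivative is strictly positive. The intermediate value theorem then produces a point strictly between $a_i$ and $b_i$ at which the second derivative vanishes and switches sign, yielding a genuine inflection point inside $(a_i,b_i)$.

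Finally, since the $k$ maximal intervals of decrease are pairwise disjoint, the $k$ inflection points produced in this manner are distinct, so $\nrel{G}$ has at least $k$ points of inflection in $(0,1)$, as required. There is no substantial obstacle in this argument; the essential work was done in establishing the preceding theorem, and the corollary follows at once from the concavity considerations spelled out just before its statement. The only minor point requiring care is to ensure that each root of the second derivative produced in $(a_i,b_i)$ is a true sign change (and hence a bona fide inflection point) rather than a tangential zero, but this is automatic from the strict negativity/positivity of the second derivative at the two probe points in each interval.
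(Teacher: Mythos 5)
Your argument is correct and is essentially the paper's own: the corollary is stated with an immediate \qed because the preceding paragraph already makes exactly this observation (concave down near each local maximum, concave up near each local minimum, hence an inflection point inside each of the $k$ disjoint maximal intervals of decrease supplied by the theorem). Your write-up merely spells out the intermediate value step and the sign-change caveat in more detail than the paper bothers to.
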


\section*{Acknowledgements}
 
The author would like to thank J. Janssen for her insightful comments. This research is partially supported by grant RGPIN-2018-05227 from Natural Sciences and Engineering Research Council of Canada (NSERC).  

\singlespacing

\bibliographystyle{amsplain}

\providecommand{\bysame}{\leavevmode\hbox to3em{\hrulefill}\thinspace}
\providecommand{\MR}{\relax\ifhmode\unskip\space\fi MR }
\providecommand{\MRhref}[2]{%
  \href{http://www.ams.org/mathscinet-getitem?mr=#1}{#2}
}
\providecommand{\href}[2]{#2}

\section*{References}


\begin{thebibliography}{10}

%
%

%
\bibitem{highwayreliability}
J.~I. Brown, C.~J. Colbourn, D. Cox, C. Graves and L. Mol, \emph{Network reliability: heading out on the highway}, Networks \textbf{77} (2021), 146--160.

\bibitem{brownmolshape}
J.~I. Brown and L. Mol, \emph{The shape of node reliability}, Discrete Appl. Math \textbf{238} (2018), 41--55.

\bibitem{ColbournBook}
C.~J. Colbourn, \emph{The combinatorics of network reliability}, Oxford
  University Press, New York, 1987.

\bibitem{NodeRelComplexity1}
C.~J. Colbourn, A.~Satyanarayana, C.~Suffel, and K.~Sutner, \emph{Computing
  residual connectedness reliability for restricted networks}, Discrete Appl.
  Math. \textbf{44} (1993), 221--232.
  

\bibitem{NodeRel1}
O.~Goldschmidt, P.~Jaillet, and R.~LaSota, \emph{On reliability of graphs with
  node failures}, Networks \textbf{24} (1994), 251--259.

\bibitem{NodeRel2}
S.~Liu, K.~Cheng, and X.~Liu, \emph{Network reliability with node failures},
  Networks \textbf{35} (2000), no.~2, 109--117.

\bibitem{Stivaros}
C.~Stivaros, \emph{On the residual node connectedness network reliability
  model}, Ph.D. thesis, Dept. of Electrical Engineering and Computer Science,
  Stevens Institute of Technology, 1990.

\bibitem{NodeRelComplexity2}
K.~Sutner, A.~Satyanarayana, and C.~Suffel, \emph{The complexity of the
  residual node connectedness reliability problem}, SIAM J. Comput. \textbf{20}
  (1991), no.~1, 149--155.

\bibitem{NodeRel3}
S.~Yu, F.-M. Shao, and H.~Meng, \emph{Uniformly optimal graphs in some classes
  of graphs with node failures}, Discrete Math. \textbf{310} (2010), no.~1,
  159--166.


%
%
%
%
%
%
%
%
%
%
%
%
%

\end{thebibliography}

\end{document}